\newcommand{\lang}{\mathcal{L}}
\newcommand{\limp}{\longrightarrow}
\newcommand{\qneg}{\sim\!}
\newcommand{\pow}{\mathcal{P}}
\newcommand{\fus}{\curlywedge}
\theoremstyle{definition}
\newtheorem{thm}{Theorem}[section]
\newtheorem{lem}[thm]{Lemma}
\newtheorem{dfn}[thm]{Definition}
\newtheorem{rem}[thm]{Remark}
\newtheorem{exa}[thm]{Example}
  \title{Negation-Free Definitions of Paraconsistency}
  \author{Sankha S. Basu\qquad\qquad Sayantan Roy
  \institute{Department of Mathematics\\
  Indraprastha Institute of Information Technology-Delhi\\
  New Delhi, India.}
  \email{sankha@iiitd.ac.in \qquad\qquad sayantanr@iiitd.ac.in}
  }
\begin{document}
\maketitle

\begin{abstract}
    \noindent Paraconsistency is commonly defined and/or characterized as the failure of a principle of explosion. The various standard forms of explosion involve one or more logical operators or connectives, among which the negation operator is the most frequent. In this article, we ask whether a negation operator is essential for describing paraconsistency. In other words, is it possible to describe a notion of paraconsistency that is independent of connectives? We present two such notions of negation-free paraconsistency, one that is completely independent of connectives and another that uses a conjunction-like binary connective that we call `fusion'. We also derive a notion of `quasi-negation' from the former, and investigate its properties.
\end{abstract}
\textbf{Keywords:}
Paraconsistency, Negation, Negation-free paraconsistency, Quasi-negation

\section{Introduction}

Paraconsistency is characterized, and sometimes even defined, as the failure of the so-called principle of \emph{`explosion'}. The latter, also known as \emph{ex contradictione sequitur quodlibet} or ECQ, is most often expressed as follows. For any formula $\alpha$,
\[
\{\alpha,\neg\alpha\}\vdash\beta,\,\hbox{ or equivalently }\, C_\vdash(\{\alpha,\neg\alpha\})=\lang,
\]
where $\neg$ is a negation operator, $\beta$ is an arbitrary element of the set of all formulas $\lang$, $\vdash$ is the logical consequence relation $\subseteq\pow(\lang)\times\lang$, and $C_\vdash$ is its corresponding consequence operator from $\pow(\lang)$ to $\pow(\lang)$. A logic $\langle\lang,\vdash\rangle$ is thus said to be paraconsistent when the above rule fails in it. 

There are, however, alternatives to the above standard understanding of explosion and the paraconsistency defined as the denial of that. A variety of these alternative notions are discussed in \cite{Robles2009}. The following are some of the other incarnations of the principle of explosion.
\begin{enumerate}
    \item For each formula $\alpha$, $\alpha\land\neg\alpha\vdash\beta$, where $\land$ denotes conjunction and $\beta$ is any formula. This rule has been called rECQ in \cite{Robles2009} and $\land$-ECQ in \cite{BasuChakraborty2021}. We will use the latter name. Paraconsistency can also be described as the failure of this rule. This kind of paraconsistency will be referred to as $\land$-paraconsistency in this article. As remarked in both these articles, in any logic $\langle\lang,\vdash\rangle$ with the usual introduction and elimination rules for $\land$, viz.,
\[
\begin{array}{ll}
     \land I.&\{\alpha,\beta\}\vdash\alpha\land\beta,\\
     \land E.&\alpha\land\beta\vdash\alpha,\,\alpha\land\beta\vdash\beta,
\end{array}
\]
failure of ECQ is equivalent to the failure of $\land$-ECQ. Hence a logic possessing $\land I$ and $\land E$ is paraconsistent iff it is $\land$-paraconsistent. However, there are logics, such as the non-adjunctive logics, where one or both of the conjunction rules are not available and in these cases, we need to clearly indicate the sense in which such logics are paraconsistent. 
\item $\vdash(\alpha\land\neg\alpha)\limp\beta$, where $\alpha,\beta$ are any two formulas. This rule is called aECQ in \cite{Robles2009}.
\item $\vdash\neg\alpha\limp(\alpha\limp\beta)$ and $\vdash\alpha\limp(\neg\alpha\limp\beta)$, where $\alpha,\beta$ are any two formulas. These are called aEFQ1 and aEFQ2 in \cite{Robles2009}. Here EFQ stands for \emph{ex falso quodlibet}, another name for ECQ.
\end{enumerate}
It may be noted that in logics with modus ponens and the Deduction theorem, aECQ is equivalent to $\land$-ECQ and aEFQ1, aEFQ2 are equivalent to ECQ.

We observe that in each of the above principles of explosion and hence the paraconsistency arising out of the failure of it, the role of the negation operator is of prime importance. This is not surprising at all considering that paraconsistent logics aim to enable one to draw reasonable conclusions from a contradiction. A contradiction is after all an unfeasible combination of two pieces of information, hence the simplest way to describe a contradiction is through the presence of a statement and its negation. There are, however, other pairs of opposing statements possible, viz., $\alpha\&\beta$ and $\neg\alpha$, where $\alpha,\beta$ are formulas and $\&$ is a logical operator that has some conjunction-like properties (but may not have all the properties of the classical conjunction). In a situation like this, we may be able to say that the set $\{\alpha\&\beta,\neg\alpha\}$ is contradictory or inconsistent but that is only if we can derive from this set $\gamma,\neg\gamma$ for some formula $\gamma$. Even after that to prove that this set is explosive, we will need to use the Tarskian transitivity rule. What if one or more of these intermediary expedients is not available? Should the two statements be not regarded as opposing?

One might also note from the various explosion inducing principles that there are multiple ways of bringing about a contradiction or fusing together opposing statements. While in ECQ, the job is done by a meta-linguistic comma, the other principles use one or more object-language connectives. Hence apart from the notion of opposing statements, the method of fusion also plays a vital role in paraconsistency.

There is, however, one notion of paraconsistency that is slightly different. This depends on the presence of a falsity constant in the language and its properties. A falsity constant is usually denoted by $\bot$ or $F$. We will use the former. The concepts of $F$-consistency and $F$-paraconsistency were defined in \cite{Robles2009}. We call these $\bot$-consistency and $\bot$-paraconsistency, respectively, and adapt the definitions with suitable changes of notation, below.

\begin{dfn}
Suppose $\langle\lang,\vdash\rangle$ is a logic with a \emph{falsity constant} $\bot\in\lang$. A set $\Gamma\subseteq\lang$ is called \emph{$\bot$-inconsistent} if $\bot\in\Gamma$, it is called \emph{$\bot$-consistent} otherwise.

A logic $\mathcal{S}=\langle\lang,\vdash\rangle$ is called \emph{$\bot$-paraconsistent} if $\bot\vdash\alpha$ for all $\alpha\in\lang$ is not a rule of $\mathcal{S}$, i.e. there is an $\alpha\in\lang$ such that $\bot\nvdash\alpha$.
\end{dfn}

As discussed in \cite{Robles2009}, in general $\bot$ (referred to as ``Das Absurde'' there) is assumed to be equivalent to some contradiction or to all of them, or to the negation of any theorem. In these cases, $\bot$-paraconsistency is essentially the same as paraconsistency. There are, however, other understandings of $\bot$, for example, it could be equivalent to the conjunction of the negation of each theorem as in minimal intuitionistic logic, but not to any contradiction. In this case, $\bot$-paraconsistency is different from paraconsistency. However, these understandings of $\bot$ make the notion of $\bot$-paraconsistency inherently dependent on negation. There are still other cases where $\bot$ is not definitionally eliminable (see \cite{RoblesMendez2008}) and in those cases although this notion of paraconsistency, also different from that of paraconsistency, is independent of negation, it requires the presence of a logical operator ($\bot$ can be seen as a nullary operator) in the language.

Our question here is the following. Is it possible to describe paraconsistency independently of the language or connectives? The motivation for this question came about from a different line of thought, viz., that of universal logic \cite{Beziau1994,Beziau2006}. The fundamental notion in universal logic is that of a logical structure, which is a pair of the form $\langle\lang,\vdash\rangle$, where $\lang$ is a set and $\vdash\,\subseteq\pow(\lang)\times\lang$. A logical structure is thus defined without regard to connectives and the set $\lang$ need not be a formula algebra as in a logic. The above question can thus be rephrased as follows. Can we define paraconsistency in the context of universal logic, i.e. is it possible to define something like a \emph{paraconsistent logical structure}? In this paper we have not focused on the distinction between a logic and a logical structure and have used the former term throughout. However, many of the assertions apply equally well to logical structures.

\section{Negation-free paraconsistency: first attempt}
We will assume that any logic, unless otherwise stated, satisfies the Tarskian rules of reflexivity, transitivity, and monotonicity. However, it will be pointed out at places if a result can be lifted to certain non-Tarskian logics as well.

The principle of explosion is first generalized as follows. For each $\alpha\in\lang$ in a logic $\langle\lang,\vdash\rangle$, there exists $\beta\in\lang$ such that 
\[
\{\alpha,\beta\}\vdash\gamma, \hbox{ where }\gamma\in\lang \hbox{ is arbitrary}.
\]
We call this rule gECQ. In other words, gECQ says that for each $\alpha\in\lang$, there exists $\beta\in\lang$ such that $C_\vdash(\{\alpha,\beta\})=\lang$. Then our first attempt at defining paraconsistency without referring to a negation operator can be described as follows.

\begin{dfn}
A logic $\langle\lang,\vdash\rangle$ is \emph{$NF$-paraconsistent} ($NF$ stands for negation-free) if gECQ fails in it, i.e. there exists an $\alpha\in\lang$ such that for every $\beta\in\lang$, $C_\vdash(\{\alpha,\beta\})\neq\lang$.
\end{dfn}

\begin{thm}\label{thm:NFpara->para}
Suppose $\langle\lang,\vdash\rangle$ is an $NF$-paraconsistent logic with a negation operator $\neg$. Then it is also paraconsistent with respect to $\neg$.
\end{thm}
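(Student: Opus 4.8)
The plan is to observe that gECQ is nothing but a weakening of ECQ, so that the failure of the former immediately forces the failure of the latter; the proof is then a one-step instantiation. Suppose $\langle\lang,\vdash\rangle$ is $NF$-paraconsistent and carries a negation operator $\neg$. By definition there is a witness $\alpha\in\lang$ such that $C_\vdash(\{\alpha,\beta\})\neq\lang$ for \emph{every} $\beta\in\lang$. Since $\neg\alpha$ is one particular element of $\lang$, specialising $\beta$ to $\neg\alpha$ yields $C_\vdash(\{\alpha,\neg\alpha\})\neq\lang$, i.e.\ $\{\alpha,\neg\alpha\}\nvdash\gamma$ for some $\gamma\in\lang$. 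That is exactly a failure of ECQ, so the logic is paraconsistent with respect to $\neg$.

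Equivalently, and perhaps more transparently, one can run the contrapositive: if ECQ holds, then for every $\alpha\in\lang$ we have $C_\vdash(\{\alpha,\neg\alpha\})=\lang$, and then $\beta:=\neg\alpha$ serves as the existential witness demanded by gECQ for that $\alpha$; since $\alpha$ was arbitrary, gECQ holds and the logic is not $NF$-paraconsistent. Either phrasing makes clear that $NF$-paraconsistency is in fact a stronger requirement than $\neg$-paraconsistency, since gECQ asks only for \emph{some} explosive partner $\beta$ of $\alpha$, whereas ECQ always hands us the canonical candidate $\neg\alpha$.

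There is essentially no obstacle here: the argument uses nothing about the behaviour of $\neg$ beyond the trivial fact that $\neg\alpha\in\lang$, and it does not even invoke the Tarskian rules. The only point requiring a little care is the quantifier structure of the two definitions — the witness $\alpha$ for $NF$-paraconsistency is chosen first and then works uniformly against all $\beta$, which is precisely what licenses plugging in $\beta=\neg\alpha$. It is worth noting in passing that the theorem leaves open whether genuinely $NF$-paraconsistent logics exist and whether the implication can be reversed under additional hypotheses on $\neg$; these are natural follow-up questions but are not needed for the statement at hand.
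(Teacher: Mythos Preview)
Your proof is correct and follows essentially the same approach as the paper: pick the witness $\alpha$ from $NF$-paraconsistency and specialise $\beta$ to $\neg\alpha$ to obtain $C_\vdash(\{\alpha,\neg\alpha\})\neq\lang$. The additional contrapositive phrasing and remarks are fine but not needed.
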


\begin{proof}
Suppose $\langle\lang,\vdash\rangle$ is $NF$-paraconsistent. Then there exists $\alpha\in\lang$ such that for all $\beta\in\lang$,\linebreak $C_\vdash(\{\alpha,\beta\})\neq\lang$. Then, in particular, $C_\vdash(\{\alpha,\neg\alpha\})\neq\lang$. Hence $\langle\lang,\vdash\rangle$ is paraconsistent.
\end{proof}

It is, however, possible for a logic to be paraconsistent but not $NF$-paraconsistent (see Examples \ref{exm:PWK-NotNFpara} and \ref{exm:Min-NotNFpara} below). 

\begin{thm}\label{thm:NFpara->Fpara}
Suppose $\langle\lang,\vdash\rangle$ is a logic with a falsity constant $\bot$. If $\langle\lang,\vdash\rangle$ is $NF$-paraconsistent then it is also $\bot$-paraconsistent.
\end{thm}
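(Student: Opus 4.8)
The plan is to argue by contraposition, or equivalently to instantiate the defining property of $NF$-paraconsistency at a convenient formula. Suppose $\langle\lang,\vdash\rangle$ is $NF$-paraconsistent. By definition there exists $\alpha\in\lang$ such that for every $\beta\in\lang$, $C_\vdash(\{\alpha,\beta\})\neq\lang$. First I would take $\beta$ to be the falsity constant $\bot$ itself, which is legitimate since $\bot\in\lang$. This yields $C_\vdash(\{\alpha,\bot\})\neq\lang$, so there is some $\gamma\in\lang$ with $\{\alpha,\bot\}\nvdash\gamma$.

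Next I would invoke monotonicity: since $\{\bot\}\subseteq\{\alpha,\bot\}$, we have $C_\vdash(\{\bot\})\subseteq C_\vdash(\{\alpha,\bot\})$, and therefore $\gamma\notin C_\vdash(\{\bot\})$, i.e. $\bot\nvdash\gamma$. Hence there is a formula not derivable from $\bot$, which is exactly the statement that $\langle\lang,\vdash\rangle$ is $\bot$-paraconsistent.

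The argument is short and the only ingredients are the definition of $NF$-paraconsistency, the presence of $\bot$ in $\lang$, and Tarskian monotonicity (which the paper assumes throughout). There is essentially no obstacle; the one point worth stating explicitly is the use of monotonicity to pass from $\{\alpha,\bot\}\nvdash\gamma$ down to $\bot\nvdash\gamma$, since $NF$-paraconsistency only directly controls two-element premise sets containing $\alpha$. If one wanted to avoid monotonicity, an alternative is to note that $\bot$-paraconsistency concerns $C_\vdash(\{\bot\})$ while $NF$-paraconsistency as stated quantifies over singletons as well only indirectly — so the cleanest route really is the monotonicity step above, and I would present it that way.
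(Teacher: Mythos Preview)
Your proof is correct and is essentially the same argument as the paper's: both hinge on monotonicity and on taking $\bot$ as the witnessing $\beta$. The only cosmetic difference is that the paper presents the contrapositive (assume $C_\vdash(\{\bot\})=\lang$ and derive $C_\vdash(\{\alpha,\bot\})=\lang$ for every $\alpha$), whereas you run the same step in the forward direction.
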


\begin{proof}
Suppose $\langle\lang,\vdash\rangle$ is not $\bot$-paraconsistent. So, $C_\vdash(\{\bot\})=\lang$. Now, let $\alpha\in\lang$. Then\linebreak $C_\vdash(\{\alpha,\bot\})=\lang$ by monotonicity. Thus for every $\alpha\in\lang$, there exists a $\beta\in\lang$, viz., $\bot$ such that $C_\vdash(\{\alpha,\beta\})=\lang$. Hence $\langle\lang,\vdash\rangle$ is not $NF$-paraconsistent.
\end{proof}

\begin{rem}
To prove the above theorem, we used only the Tarskian monotonicity condition. Thus the result carries through for certain non-Tarskian logics as well, as long as they are monotone.
\end{rem}

\begin{exa}\label{exm:PWK-NotNFpara}
Paraconsistent weak Kleene logic (PWK), axiomatized in \cite{Bonzio2017}, and intuitionistic paraconsistent weak Kleene logic, axiomatized in \cite{BasuChakraborty2021}, have $0\limp\alpha$ as an axiom, where 0 is the falsity constant, $\limp$ denotes implication and $\alpha$ is any formula. Both PWK and IPWK have a restricted version of modus ponens as a valid rule of inference. That is enough to guarantee explosion from 0. Hence they are not $\bot$-paraconsistent. Thus by Theorem \ref{thm:NFpara->Fpara}, PWK and IPWK are not $NF$-paraconsistent, although they are paraconsistent. 
\end{exa}

\begin{exa}\label{exm:Min-NotNFpara}
Another example of a paraconsistent logic that is not $NF$-paraconsistent is minimal logic. Minimal logic is not $\bot$-paraconsistent, as noted in \cite{Robles2009}, and hence not $NF$-paraconsistent.
\end{exa}

We now turn to some examples of $NF$-paraconsistent logics.

\begin{exa}
Suppose $\mathcal{S}=\langle\lang,\vdash\rangle$ is a logic where $\vdash$ is defined as follows. For any $\Gamma\cup\{\alpha\}\subseteq\lang$,
$\Gamma\vdash\alpha$ iff $\alpha\in\Gamma$. Such a logic, which may be called a \emph{purely reflexive} logic, is clearly $NF$-paraconsistent as long as $\lang$ has at least three distinct elements.

It may be noted that such a purely reflexive logic is also monotone and transitive, and hence Tarskian.
\end{exa}

\begin{exa}\label{exm:NF-para2}
Suppose $(\lang,\le)$ is a partially ordered set with three distinct elements, $0,1,2$, where $0\le1$ and 2 is incomparable to both 0 and 1. This can be visualized as follows.
\begin{center}
	\begin{tikzpicture}
		\node[circle,draw=black,very thick,outer sep=2pt] (a) at (0,0) {$0$};
		\node[circle,draw=black,very thick,outer sep=2pt] (b) at (0,2) {$2$};
		\node[circle,draw=black,very thick] (c) at (1,1) {$1$};
		\draw[-stealth,very thick] (a) -- (b);
	\end{tikzpicture}

\end{center}
We now define $\langle\lang,\vdash\rangle$ as follows. For any $\Gamma\cup\{\alpha\}\subseteq\lang$, $\Gamma\vdash\alpha$ iff for all $\gamma\in\Gamma$, $\gamma\le\alpha$. It is then easy to check that $\{1,0\}\not\vdash0,2$ and $\{1,2\}\not\vdash0,1,2$. Thus for all $\beta\in\lang$, $C_\vdash(\{1,\beta\})\neq\lang$. Hence $\langle\lang,\vdash\rangle$ is $NF$-paraconsistent.

A dual of $\langle\lang,\vdash\rangle$, $\langle\lang,\vdash^\prime\rangle$, where for any $\Gamma\cup\{\alpha\}\subseteq\lang$, $\Gamma\vdash^\prime\alpha$ iff for all $\gamma\in\Gamma$, $\alpha\le\gamma$ is also $NF$-paraconsistent. This is because $\{0,1\}\not\vdash^\prime 1,2$ and $\{0,2\}\not\vdash^\prime 0,1,2$ and hence for all $\beta\in\lang$, $C_{\vdash^\prime}(\{0,\beta\})\neq\lang$.

It may be noted that both systems are non-reflexive and non-monotonic, and hence non-Tarskian.
\end{exa}

\begin{exa}
This example is also based on a poset $(\lang,\le)$ with three elements $0,1,2$ as in the previous example but in this case we take $0,1\le2$ while 0 and 1 are incomparable. 
This can be visualized as follows.
\begin{center}
	\begin{tikzpicture}
	\node[circle,draw=black,very thick,outer sep=2pt] (a) at (0,0) {$0$};
	\node[circle,draw=black,very thick,outer sep=2pt] (b) at (2,0) {$1$};
	\node[circle,draw=black,very thick,outer sep=2pt] (c) at (1,1) {$2$};
	\draw[-stealth,very thick] (a) -- (c);
	\draw[-stealth,very thick] (b) -- (c);
\end{tikzpicture}
\end{center}
We take the same definition for $\vdash$ as in Example \ref{exm:NF-para2}. Then $\{0,2\}\not\vdash0,1$ and $\{1,2\}\not\vdash0,1$. Thus for all $\beta\in\lang$, $C_\vdash(\{2,\beta\})\neq\lang$.

Working with the dual poset $(\lang,\le)$, where $2\le 0,1$ while $0,1$ are incomparable, and taking $\vdash^\prime$ as defined in Example \ref{exm:NF-para2}, we obtain another $NF$-paraconsistent system. The poset can be visualized as follows.
\begin{center}
	\begin{tikzpicture}
		\node[circle,draw=black,very thick,outer sep=2pt] (a) at (0,0) {$0$};
		\node[circle,draw=black,very thick,outer sep=2pt] (b) at (2,0) {$1$};
		\node[circle,draw=black,very thick,outer sep=2pt] (c) at (1,-1) {$2$};
		\draw[-stealth,very thick] (c) -- (a);
		\draw[-stealth,very thick] (c) -- (b);
	\end{tikzpicture}
\end{center}
The fact that $\langle\lang,\vdash^\prime\rangle$ is $NF$-paraconsistent can be seen by noting that $\{0,2\}\not\vdash^\prime0,1$ and $\{1,2\}\not\vdash^\prime0,1$.

These systems are also non-reflexive and  non-monotonic, and hence non-Tarskian.
\end{exa}

\begin{rem}\label{rem:NF-para3}
The above examples can be generalized in the following way. Let $(\lang,\le)$ be a poset with two incomparable elements $\alpha,\beta$ (i.e. $\alpha\not\le\beta$ and $\beta\not\le\alpha$). Then defining $\langle\lang,\vdash\rangle$ as in the above examples, we see that for all $\gamma\in\lang$, $\{\alpha,\gamma\}\not\vdash\beta$ and $\{\beta,\gamma\}\not\vdash\alpha$. Thus for all $\gamma\in\lang$, $C_\vdash(\{\alpha,\gamma\})\neq\lang$ and $C_\vdash(\{\beta,\gamma\})\neq\lang$. Hence $\langle\lang,\vdash\rangle$ is $NF$-paraconsistent.

It can similarly be argued that $\langle\lang,\vdash^\prime\rangle$, where $\vdash^\prime$ is defined as in the above examples, is also $NF$-paraconsistent.

Moreover, these systems are all non-reflexive and  non-monotonic, and hence non-Tarskian.
\end{rem}

Continuing along the same lines, let $\lang$ be any set, $(P\le)$ be any poset, and $\emptyset\subsetneq\mathcal{V}\subseteq P^\lang$ be a set of functions (these may be thought of as \emph{valuation} functions). We can now define a (semantic) consequence relation $\models_\mathcal{V}\subseteq\pow(\lang)\times\lang$ as follows. For all $\Gamma\cup\{\alpha\}\subseteq\lang$,
\[
\Gamma\models_\mathcal{V}\alpha\quad \hbox{ iff }\quad\hbox{ for all }v\in\mathcal{V},\, v(\beta)\le v(\alpha)\hbox{ for every }\beta\in\Gamma.
\]
We then have the following theorem.

\begin{thm}\label{thm:posetWincom}
Let $\lang$ be a non-empty set, $(P,\le)$ be a poset with at least two incomparable elements, and $\mathcal{V},\models_\mathcal{V}$ be as above. If there exists a $\widehat{v}\in\mathcal{V}$ such that $\widehat{v}(\gamma)$ and $\widehat{v}(\delta)$ are incomparable for some $\gamma,\delta\in\lang$, then $\langle\lang,\models_\mathcal{V}\rangle$ is $NF$-paraconsistent.
\end{thm}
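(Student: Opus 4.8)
The plan is to exhibit a single formula $\alpha$ that witnesses the failure of gECQ, and the obvious candidate is $\alpha=\gamma$, one of the two formulas on which the distinguished valuation $\widehat{v}$ takes incomparable values. So first I would invoke the hypothesis to fix $\widehat{v}\in\mathcal{V}$ and $\gamma,\delta\in\lang$ with $\widehat{v}(\gamma)$ and $\widehat{v}(\delta)$ incomparable in $(P,\le)$ (note in passing that this already forces $\gamma\neq\delta$, since $\le$ is reflexive, and that it makes the separate assumption ``$P$ has two incomparable elements'' redundant).

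Next, I would take an arbitrary $\beta\in\lang$ and show that $\delta\notin C_{\models_{\mathcal{V}}}(\{\gamma,\beta\})$, i.e. that $\{\gamma,\beta\}\not\models_{\mathcal{V}}\delta$. Unwinding the definition of $\models_{\mathcal{V}}$, the relation $\{\gamma,\beta\}\models_{\mathcal{V}}\delta$ would in particular require $v(\gamma)\le v(\delta)$ for every $v\in\mathcal{V}$; instantiating $v=\widehat{v}$ yields $\widehat{v}(\gamma)\le\widehat{v}(\delta)$, contradicting the incomparability of $\widehat{v}(\gamma)$ and $\widehat{v}(\delta)$. Hence $\{\gamma,\beta\}\not\models_{\mathcal{V}}\delta$, so $C_{\models_{\mathcal{V}}}(\{\gamma,\beta\})\neq\lang$.

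Since $\beta$ was arbitrary, this establishes that there is an $\alpha\in\lang$ (namely $\gamma$) such that for every $\beta\in\lang$, $C_{\models_{\mathcal{V}}}(\{\gamma,\beta\})\neq\lang$, which is exactly the definition of $\langle\lang,\models_{\mathcal{V}}\rangle$ being $NF$-paraconsistent.

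I do not anticipate a genuine obstacle: the only point requiring care is that the same witness $\gamma$ and the same ``escaping'' formula $\delta$ work uniformly across all choices of $\beta$, and this is immediate because the refuting valuation $\widehat{v}$ is chosen once and for all, independently of $\beta$. It is also worth remarking that the argument uses none of reflexivity, monotonicity, or transitivity of $\models_{\mathcal{V}}$, so — just as in Example~\ref{exm:NF-para2} and Remark~\ref{rem:NF-para3} — the conclusion holds even when these semantic consequence relations fail to be Tarskian.
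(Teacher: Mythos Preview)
Your proposal is correct and follows essentially the same route as the paper's own proof: fix $\widehat{v},\gamma,\delta$, then for arbitrary $\beta$ use $\widehat{v}(\gamma)\not\le\widehat{v}(\delta)$ to conclude $\{\gamma,\beta\}\not\models_{\mathcal{V}}\delta$, whence $C_{\models_{\mathcal{V}}}(\{\gamma,\beta\})\neq\lang$. Your additional observations (that the hypothesis on $P$ is redundant and that no Tarskian properties are used) are accurate side remarks not present in the paper's proof.
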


\begin{proof}
Suppose $\widehat{v}\in\mathcal{V}$ and $\gamma,\delta\in\lang$ such that $\widehat{v}(\gamma)$ and $\widehat{v}(\delta)$ are incomparable. Let $\beta\in\lang$. Then we note that $\{\gamma,\beta\}\not\models_\mathcal{V}\delta$ since $\widehat{v}(\gamma)\not\le\widehat{v}(\delta)$. Thus $C_{\models_\mathcal{V}}(\{\gamma,\beta\})\neq\lang$ for all $\beta\in\lang$. Hence $\langle\lang,\models_\mathcal{V}\rangle$ is $NF$-paraconsistent.
\end{proof}

\begin{rem}
At the cost of a slight digression, it may be pointed out here that the style of defining a semantic consequence relation using the order relation if and when present in the semantic structures, as opposed to using a set of designated elements or logical matrices, is not novel. While for certain logics, such as classical propositional logic, the two approaches lead to the same consequence relation, such is not the case for other logics. A comparison between the two approaches has been discussed in \cite{SenChakraborty2008}. The consequence relation above is, however, slightly different from the one defined using the order relation in the semantic structure in \cite{SenChakraborty2008}. There $\Gamma\models_\mathcal{V}\alpha$ iff for all $v\in\mathcal{V}$, $v(\Gamma)\le v(\alpha)$, where $\Gamma\cup\{\alpha\}\subseteq\lang$ with $\lang,\mathcal{V}$ as in Theorem \ref{thm:posetWincom}, and $v(\Gamma)$ denotes some kind of a composition of the values in $\{v(\beta)\mid\,\beta\in\Gamma\}$, the composition being an operation in the semantic structure. Due to this, while in our case, every $\alpha\in\lang$ is a \emph{theorem}, i.e. $\emptyset\models_\mathcal{V}\alpha$, the logics in \cite{SenChakraborty2008} equipped with such consequence relations are without any theorems.
\end{rem}

Our next example is taken, with some change of notation, from \cite{Carnielli2007} where this is used in a different context.

\begin{exa}
Let $\lang=\mathbb{R}$, the set of real numbers. $\vdash\,\subseteq\pow(\lang)\times\lang$ is then defined as follows. For any $\Gamma\cup\{\alpha\}\subseteq\lang$,
\[
\Gamma\vdash\alpha\quad\hbox{iff}\quad\left\{\begin{array}{l}
     \alpha\in\Gamma,\,\hbox{or}\\
     \alpha=\frac{1}{n}\,\hbox{ for some natural number }\,n\in\mathbb{N},\,n\ge1,\,\hbox{or}\\
     \hbox{there is a sequence }\,(\alpha_n)_{n\in\mathbb{N}}\,\hbox{ in }\,\Gamma\,\hbox{ that converges to }\,\alpha.
\end{array}\right.
\]
Now, it is easy to see that for any $\alpha,\beta,\gamma\in\lang$ with  $\gamma\neq\alpha,\beta$ and $\gamma\neq\frac{1}{n}$ for any $n\in\mathbb{N}$, $\{\alpha,\beta\}\not\vdash\gamma$. Thus $\langle\lang,\vdash\rangle$ is $NF$-paraconsistent.

The system $\langle\lang,\vdash\rangle$ is reflexive and monotone but not transitive. To see the latter, one may note that for any $n\in\mathbb{N}$, $\vdash\frac{1}{n}$ and $\left\{\frac{1}{n}\mid\,n\in\mathbb{N}\right\}\vdash0$ but $\not\vdash0$. This phenomenon is also noted in \cite{Carnielli2007}.
\end{exa}

Our next set of examples of $NF$-paraconsistent logics use the notion of $q$-consequence (\emph{quasi}-con\-se\-quence) introduced in \cite{Malinowski1990} to produce a reasoning scheme in which ``rejection'' and ``acceptance'' of a proposition are not necessarily reciprocal of each other. Before proceeding to show that logics equipped with certain $q$-consequence operators are $NF$-paraconsistent, we describe the concept of a $q$-consequence operator below.

\begin{dfn}\label{dfn:q-cons}
Let $\lang$ be a set and $W:\pow(\lang)\to\pow(\lang)$. $W$ is called a \emph{$q$-consequence operator} on $\lang$ if the following conditions hold.
\begin{enumerate}[label=(\roman*)]
    \item For all $\Gamma\cup\Sigma\subseteq\lang$, $\Gamma\subseteq\Sigma$ implies that $W(\Gamma)\subseteq W(\Sigma)$.
    \item For all $\Gamma\subseteq\lang$, $W(\Gamma\cup W(\Gamma))=W(\Gamma)$.
\end{enumerate}
Given such a $q$-consequence operator $W$ on $\lang$, the pair $\langle\lang,\vdash^W\rangle$ is called the logic induced by $W$, where $\vdash^W\,\subseteq\,\pow(\lang)\times\lang$ is defined as follows. For any $\Gamma\cup\{\alpha\}\subseteq\lang$, $\Gamma\vdash^W\alpha$ iff $\alpha\in W(\Gamma)$.
\end{dfn}

\begin{thm}
Let $\langle\lang,\vdash^W\rangle$ be the logic induced by a $q$-consequence operator $W$ on the set $\lang$. If $W(\lang)\subsetneq\lang$, then $\langle\lang,\vdash^W\rangle$ is $NF$-paraconsistent.
\end{thm}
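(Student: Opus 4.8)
The plan is to unwind the definition of $NF$-paraconsistency and observe that the statement falls out almost immediately from the monotonicity condition (i) in Definition \ref{dfn:q-cons}; condition (ii) will not even be needed. First I would record that, for the logic induced by $W$, the consequence operator \emph{is} $W$: for any $\Gamma\subseteq\lang$,
\[
C_{\vdash^W}(\Gamma)=\{\gamma\in\lang\mid\Gamma\vdash^W\gamma\}=\{\gamma\in\lang\mid\gamma\in W(\Gamma)\}=W(\Gamma).
\]

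Next I would note that the hypothesis $W(\lang)\subsetneq\lang$ forces $\lang\neq\emptyset$ (if $\lang=\emptyset$ then $W(\lang)\subseteq\emptyset$, so $W(\lang)=\lang$), so we may fix some $\alpha\in\lang$ to serve as the witness; in fact any element of $\lang$ will work. Then, for an arbitrary $\beta\in\lang$, we have $\{\alpha,\beta\}\subseteq\lang$, so by condition (i) of Definition \ref{dfn:q-cons},
\[
C_{\vdash^W}(\{\alpha,\beta\})=W(\{\alpha,\beta\})\subseteq W(\lang)\subsetneq\lang.
\]
Hence $C_{\vdash^W}(\{\alpha,\beta\})\neq\lang$ for every $\beta\in\lang$, which is precisely the condition for $\langle\lang,\vdash^W\rangle$ to be $NF$-paraconsistent.

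There is essentially no obstacle here: the entire argument rests on monotonicity together with the assumption $W(\lang)\subsetneq\lang$, and the only point worth flagging is the (trivial) nonemptiness of $\lang$, which is what licenses the choice of the witness $\alpha$. If anything, the ``hard part'' is purely expository, namely making explicit the identification $C_{\vdash^W}=W$ so that the inclusion $W(\{\alpha,\beta\})\subseteq W(\lang)$ can be read directly as a statement about the consequence operator.
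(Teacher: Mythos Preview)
Your proof is correct and follows essentially the same route as the paper: both arguments rest entirely on monotonicity (condition (i)) to conclude $W(\{\alpha,\beta\})\subseteq W(\lang)\subsetneq\lang$ for all $\alpha,\beta$, with condition (ii) playing no role. The only cosmetic difference is that the paper first singles out some $\alpha\notin W(\lang)$ to explain why every $\Gamma$ is non-trivial, whereas you go directly to the inclusion; your explicit identification $C_{\vdash^W}=W$ and the remark on nonemptiness of $\lang$ are nice clarifications.
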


\begin{proof}
We note that since $W(\lang)\subsetneq\lang$, there exists an $\alpha\in\lang$ such that $\alpha\notin W(\lang)$, i.e. $\lang\not\vdash^W\alpha$. Then, by condition (i) of Definition \ref{dfn:q-cons}, every $\Gamma\subseteq\lang$ is non-trivial. Thus in particular, for any $\beta,\gamma\in\lang$, $C_{\vdash^W}(\{\beta,\gamma\})\neq\lang$. Hence $\langle\lang,\vdash^W\rangle$ is $NF$-paraconsistent.
\end{proof}

\subsection{A quasi-negation}
Suppose $\mathcal{S}=\langle\lang,\vdash\rangle$ is a logic. For any $\alpha,\beta\in\lang$, if $C_\vdash(\{\alpha,\beta\})=\lang$, then $\beta$ acts like a negation of $\alpha$. This can be justified by noting that in case $\mathcal{S}$ has a negation operator, $\neg$, and is not paraconsistent, i.e. ECQ holds, then $\neg\alpha$ is indeed such a $\beta$. However, in general, such a $\beta$ need not be unique. For example, in case of classical propositional logic (CPC) or intuitionistic propositional logic (IPC), we have $C_\vdash(\{\alpha,\alpha\limp\neg\alpha\})=\lang$. Thus $\alpha\limp\neg\alpha$ is also an instance of such a $\beta$. Motivated by this, we now have the following definition.

\begin{dfn}\label{dfn:quasi-neg}
Suppose $\mathcal{S}=\langle\lang,\vdash\rangle$ is a logic and $\alpha\in\lang$. Then a \emph{quasi-negation} of $\alpha$ is any $\beta\in\lang$ such that $C_\vdash(\{\alpha,\beta\})=\lang$. We denote a quasi-negation of $\alpha$ by $\qneg\alpha$ and the set of all quasi-negations of $\alpha$ by $QN(\alpha)$.
\end{dfn}

We now explore some properties of the quasi-negation in the following theorem.

\begin{thm}
Suppose $\mathcal{S}=\langle\lang,\vdash\rangle$ is a logic and $\alpha,\beta\in\lang$. 
\begin{enumerate}[label=(\arabic*)]
    \item $\alpha$ is a $\qneg\qneg\alpha$, i.e. $\alpha\in QN(\qneg\alpha)$. (Quasi double negation)
    
    \item $\alpha$ is a $\qneg\beta$ iff $\beta$ is a $\qneg\alpha$, i.e. $\alpha\in QN(\beta)$ iff $\beta\in QN(\alpha)$. (Quasi contraposition).
    \item If $\langle\lang,\vdash\rangle$ is Tarskian and $\alpha\vdash\beta$ then any $\qneg\beta$ is a $\qneg\alpha$, i.e. $QN(\beta)\subseteq QN(\alpha)$. 
\end{enumerate}
\end{thm}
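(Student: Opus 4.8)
The plan is to dispatch parts (1) and (2) by a trivial observation about unordered pairs and to reserve the Tarskian hypotheses for part (3).

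For (2), I would note that $\{\alpha,\beta\}$ and $\{\beta,\alpha\}$ are the same subset of $\lang$, so the statements $C_\vdash(\{\alpha,\beta\})=\lang$ and $C_\vdash(\{\beta,\alpha\})=\lang$ are literally one and the same; hence $\beta\in QN(\alpha)$ iff $\alpha\in QN(\beta)$. Part (1) is then immediate: if $\qneg\alpha$ denotes some $\beta\in QN(\alpha)$, part (2) gives $\alpha\in QN(\beta)$, i.e. $\alpha\in QN(\qneg\alpha)$. Equivalently, unfolding the definition directly, $\alpha\in QN(\qneg\alpha)$ says exactly that $C_\vdash(\{\qneg\alpha,\alpha\})=\lang$, which is precisely the defining property of $\qneg\alpha$ as a quasi-negation of $\alpha$.

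For (3), assume $\langle\lang,\vdash\rangle$ is Tarskian and $\alpha\vdash\beta$, and let $\gamma\in QN(\beta)$ be arbitrary; the goal is $\gamma\in QN(\alpha)$, i.e. $C_\vdash(\{\alpha,\gamma\})=\lang$. From $\alpha\vdash\beta$ and monotonicity, $\beta\in C_\vdash(\{\alpha\})\subseteq C_\vdash(\{\alpha,\gamma\})$; from reflexivity, $\gamma\in C_\vdash(\{\alpha,\gamma\})$; hence $\{\beta,\gamma\}\subseteq C_\vdash(\{\alpha,\gamma\})$. Applying $C_\vdash$ (monotone) and then using its idempotence, which holds because the Tarskian conditions make $C_\vdash$ a closure operator, we get $\lang=C_\vdash(\{\beta,\gamma\})\subseteq C_\vdash(C_\vdash(\{\alpha,\gamma\}))=C_\vdash(\{\alpha,\gamma\})$, so $C_\vdash(\{\alpha,\gamma\})=\lang$. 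Therefore $QN(\beta)\subseteq QN(\alpha)$.

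There is no real obstacle here; the only point requiring care is in (3), where one must invoke all three Tarskian properties — monotonicity to lift $\alpha\vdash\beta$ to the enlarged premise set $\{\alpha,\gamma\}$, reflexivity to keep $\gamma$ available, and transitivity (equivalently, idempotence of $C_\vdash$) to chain the two closure steps. It is worth remarking that this argument goes through verbatim for any logic that is monotone, reflexive, and transitive, even if it fails other conditions one might impose.
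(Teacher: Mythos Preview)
Your proposal is correct and follows essentially the same line as the paper: parts (1) and (2) are unwound directly from the definition via the symmetry of the unordered pair $\{\alpha,\beta\}$, and part (3) uses reflexivity and monotonicity to place both $\beta$ and the chosen quasi-negation into $C_\vdash(\{\alpha,\gamma\})$, followed by transitivity (your idempotence of $C_\vdash$) to conclude. The only cosmetic difference is that the paper phrases (3) in terms of $\vdash$ and the generic symbol $\qneg\beta$, whereas you work with $C_\vdash$ and a named element $\gamma\in QN(\beta)$.
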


\begin{proof}
\begin{enumerate}[label=(\arabic*)]
    \item By the definition of quasi-negation, $C_\vdash(\{\alpha,\qneg\alpha\})=\lang$. Hence $\alpha\in QN(\qneg\alpha)$, i.e. $\alpha$ is a quasi-negation of $\qneg\alpha$. In other words $\alpha$ is a $\qneg\qneg\alpha$.
    
    \item Suppose $\alpha$ is a $\qneg\beta$, i.e. $\alpha\in QN(\beta)$. Then $C_\vdash(\{\beta,\alpha\})=\lang$. So, $\beta$ is a $\qneg\alpha$, i.e. $\beta\in QN(\alpha)$. The converse follows by symmetry.
    
    \item Suppose $\langle\lang,\vdash\rangle$ is Tarskian and $\alpha\vdash\beta$. Then $\{\alpha,\qneg\beta\}\vdash\qneg\beta$ and $\{\alpha,\qneg\beta\}\vdash\beta$, by reflexivity and monotonicity, respectively. Now, since $C_\vdash(\{\beta,\qneg\beta\})=\lang$, by transitivity, we have $C_\vdash(\{\alpha,\qneg\beta\})=\lang$. Hence $\qneg\beta$ is a $\qneg\alpha$. Since $\qneg\beta$ was an arbitrary quasi-negation of $\beta$, this implies that $QN(\beta)\subseteq QN(\alpha)$.
\end{enumerate}
\end{proof}

The following result establishes some connections between the above defined quasi-negation and the concepts of paraconsistency and $NF$-paraconsistency.

\begin{thm}
Suppose $\mathcal{S}=\langle\lang,\vdash\rangle$ is a logic. 
\begin{enumerate}[label=(\arabic*)]
    \item Suppose $\mathcal{S}$ has a negation operator, $\neg$. Then $\mathcal{S}$ is paraconsistent iff there exists an $\alpha\in\lang$ such that $\neg\alpha\notin QN(\alpha)$.
    \item $\mathcal{S}$ is $NF$-paraconsistent iff there exists an $\alpha\in\lang$ such that $QN(\alpha)=\emptyset$.
    \item If $\mathcal{S}$ is monotonic and $QN(\bot)\neq\lang$ then $\mathcal{S}$ is $\bot$-paraconsistent.
\end{enumerate}
\end{thm}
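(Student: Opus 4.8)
The plan is to prove each of the three items directly from the definitions, using the characterizations of paraconsistency, $NF$-paraconsistency, and $\bot$-paraconsistency recalled in the introduction together with Definition~\ref{dfn:quasi-neg}.

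For item (1), I would unwind the definition of quasi-negation: $\neg\alpha\in QN(\alpha)$ means exactly $C_\vdash(\{\alpha,\neg\alpha\})=\lang$, which is the instance of ECQ at $\alpha$. So ``$\mathcal{S}$ is paraconsistent'' — i.e. ECQ fails, i.e. there is some $\alpha$ with $C_\vdash(\{\alpha,\neg\alpha\})\neq\lang$ — is literally the same statement as ``there exists $\alpha$ with $\neg\alpha\notin QN(\alpha)$.'' This is a one-line biconditional with no obstacle.

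For item (2), I would again translate through Definition~\ref{dfn:quasi-neg}: $QN(\alpha)=\emptyset$ means there is no $\beta\in\lang$ with $C_\vdash(\{\alpha,\beta\})=\lang$, i.e. for every $\beta\in\lang$, $C_\vdash(\{\alpha,\beta\})\neq\lang$. Then ``$\mathcal{S}$ is $NF$-paraconsistent'' — the failure of gECQ, i.e. there exists $\alpha$ such that for all $\beta$, $C_\vdash(\{\alpha,\beta\})\neq\lang$ — is exactly ``there exists $\alpha$ with $QN(\alpha)=\emptyset$.'' Again a direct equivalence.

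For item (3), which is the only one that is not a pure restatement, I would argue by contraposition in the same style as the proof of Theorem~\ref{thm:NFpara->Fpara}. Suppose $\mathcal{S}$ is monotonic but not $\bot$-paraconsistent; then $C_\vdash(\{\bot\})=\lang$, so for every $\alpha\in\lang$ we have $\alpha\in C_\vdash(\{\bot\})\subseteq C_\vdash(\{\bot,\alpha\})$ by monotonicity, i.e. $C_\vdash(\{\bot,\alpha\})=\lang$, which says $\alpha\in QN(\bot)$ for every $\alpha$, hence $QN(\bot)=\lang$. Taking the contrapositive gives: if $QN(\bot)\neq\lang$ then $\mathcal{S}$ is $\bot$-paraconsistent, using only monotonicity. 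The main (very mild) obstacle is just being careful that $QN(\bot)=\lang$ requires checking $C_\vdash(\{\bot,\alpha\})=\lang$ for all $\alpha$, including $\alpha=\bot$, which is immediate. No step here is genuinely hard; the work is entirely in matching up the definitions.
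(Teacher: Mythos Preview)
Your proposal is correct and matches the paper's proof essentially line for line: the paper also notes that (1) and (2) are immediate from the definition of quasi-negation, and proves (3) by the same contrapositive argument via monotonicity. The only difference is cosmetic---you spell out the unwinding of (1) and (2) in more detail than the paper does.
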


\begin{proof}
The statements (1) and (2) follow directly from the definition of quasi-negation. To prove statement (3), suppose $\mathcal{S}$ is a monotonic logic. Suppose $\mathcal{S}$ is not $\bot$-paraconsistent. Then $C_\vdash(\{\bot\})=\lang$. So, by monotonicity, $C_\vdash(\{\bot,\beta\})=\lang$ for all $\beta\in\lang$. This implies that $\beta\in QN(\bot)$ for all $\beta\in\lang$, i.e. $QN(\bot)=\lang$. Hence $QN(\bot)\neq\lang$ implies that $\mathcal{S}$ is $\bot$-paraconsistent.
\end{proof}

\begin{rem}
It may be noted that a logic is $NF$-paraconsistent iff it is paraconsistent with respect to the quasi-negation $\qneg$.
\end{rem}

We end this subsection with a few properties of quasi-negation in case of CPC $=\langle\lang,\vdash_{\mathrm{CPC}}\rangle$, where $\vdash_{\mathrm{CPC}}$ is the usual classical consequence relation, using the kite of negations in \cite{Dunn1999}. We recall that CPC is sound and complete with respect to the 2-element Boolean algebra $\langle\{0,1\},\land,\lor,\neg\rangle$. Now, for any $\alpha\in\lang$, $QN(\alpha)\neq\emptyset$, i.e. there exists a $\qneg\alpha$ and $C_{\vdash_{\mathrm{CPC}}}(\{\alpha,\qneg\alpha\})=\lang$. This implies that for any valuation $v:\lang\to\{0,1\}$, if $v(\alpha)=1$ then $v(\qneg\alpha)=0$. Here, by $v(\qneg\alpha)=0$, we mean $v(\beta)=0$ for every $\beta\in QN(\alpha)$. Conversely, suppose $v:\lang\to\{0,1\}$ is a valuation such that $v(\qneg\alpha)=0$. Then, in particular, $v(\neg\alpha)=0$ since $\neg\alpha\in QN(\alpha)$. So $v(\alpha)=1$. Thus we can say that for any $\alpha\in\lang$  and any valuation $v:\lang\to\{0,1\}$, $v(\alpha)=1$ iff $v(\qneg\alpha)=0$. Now, for any $\alpha,\beta\in\lang$, the following properties are easy to check.

\begin{enumerate}[label=(\arabic*)]
    \item If $\alpha\vdash\beta$ then $\qneg\beta\vdash\qneg\alpha$ (Contraposition)
    \item $\alpha\vdash\neg\qneg\alpha$ and $\alpha\vdash\qneg\neg\alpha$ (Galois double negation)
    \item $\alpha\vdash\qneg\,\qneg\alpha$ (Constructive double negation)
    \item $\qneg\,\qneg\alpha\vdash\alpha$ (Classical double negation)
    \item $\alpha\land\qneg\alpha\vdash\beta$ (Absurdity)
\end{enumerate}

It is also easy to check that in case of IPC, all the above properties, except (4), are satisfied by the quasi-negation. Thus, at least in case of CPC and IPC, the quasi-negations of a formula behave in much the same way as the regular classical and intuitionistic negation operators, respectively.

\section{Negation-free paraconsistency: second attempt}
A second attempt at defining paraconsistency, without reference to a negation operator, involves a \emph{fusion} operator in the object language instead of the meta-linguistic comma. This fusion operator, which we denote by $\fus$, has some conjunction-like properties and in special cases, may be the standard conjunction operator $\land$. Given a logic $\langle\lang,\vdash\rangle$ with $\fus$ as a logical operator, we demand the following commutativity property.
\[
\alpha\fus\beta\vdash\beta\fus\alpha,\;\hbox{for all }\alpha,\beta\in\lang.
\]
We now give another definition of paraconsistency which can be seen as a generalization of the notion of $\land$-paraconsistency. The following principle of explosion, which we call $\fus$-ECQ may be compared with $\land$-ECQ. For each $\alpha\in\lang$, there exists a $\beta\in\lang$ such that
\[
\alpha\fus\beta\vdash\gamma,\hbox{ where }\gamma\in\lang\hbox{ is arbitrary}.
\]

\begin{dfn}
A logic $\langle\lang,\vdash\rangle$ with a fusion operator $\fus$ is called \emph{$NFF$-paraconsistent} ($NFF$ stands for negation-free with a fusion operator) if $\fus$-ECQ fails in it, i.e. there exists an $\alpha\in\lang$ such that for all $\beta\in\lang$, $C_\vdash(\{\alpha\fus\beta\})\neq\lang$.
\end{dfn}

\begin{lem}\label{lem:Fusion}
Suppose $\mathcal{S}=\langle\lang,\vdash\rangle$ is a logic with $\fus$. If $\mathcal{S}$ validates the following $\fus$-introduction rule
\[
\fus I.\quad\{\alpha,\beta\}\vdash\alpha\fus\beta,\;\hbox{for all }\alpha,\beta\in\lang,
\]
then $\mathcal{S}$ is $NF$-paraconsistent implies that it is also $NFF$-paraconsistent. On the other hand, if $\mathcal{S}$ validates the following $\fus$-elimination rules
\[
\fus E.\quad\alpha\fus\beta\vdash\alpha,\;\alpha\fus\beta\vdash\beta,\;\hbox{for all }\alpha,\beta\in\lang,
\]
then $\mathcal{S}$ is $NFF$-paraconsistent implies that it is also $NF$-paraconsistent.
\end{lem}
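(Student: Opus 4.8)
The plan is to establish both implications via a single observation: under the relevant fusion rule the sets $C_\vdash(\{\alpha,\beta\})$ and $C_\vdash(\{\alpha\fus\beta\})$ become comparable, so the defining condition of each notion of paraconsistency (``$C_\vdash(\cdots)\neq\lang$ for every $\beta$'') transfers, and the same $\alpha$ can serve as witness in both directions. The only background fact I would invoke beyond the rules $\fus I$, $\fus E$ is that, since $\langle\lang,\vdash\rangle$ is Tarskian, $C_\vdash$ is a closure operator; in particular, whenever $\Theta\subseteq C_\vdash(\Delta)$ one has $C_\vdash(\Theta)\subseteq C_\vdash(\Delta)$ (apply monotonicity to $\Theta\subseteq C_\vdash(\Delta)$ and then idempotence).

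For the first implication, I would assume $\mathcal{S}$ validates $\fus I$ and is $NF$-paraconsistent, and fix $\alpha\in\lang$ with $C_\vdash(\{\alpha,\beta\})\neq\lang$ for every $\beta\in\lang$. Given an arbitrary $\beta\in\lang$, the rule $\fus I$ yields $\alpha\fus\beta\in C_\vdash(\{\alpha,\beta\})$, so $\{\alpha\fus\beta\}\subseteq C_\vdash(\{\alpha,\beta\})$ and hence $C_\vdash(\{\alpha\fus\beta\})\subseteq C_\vdash(\{\alpha,\beta\})$; since the latter is a proper subset of $\lang$, so is the former, and in particular $C_\vdash(\{\alpha\fus\beta\})\neq\lang$. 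As $\beta$ was arbitrary, $\alpha$ witnesses $NFF$-paraconsistency.

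For the second implication, I would assume $\mathcal{S}$ validates $\fus E$ and is $NFF$-paraconsistent, and fix $\alpha\in\lang$ with $C_\vdash(\{\alpha\fus\beta\})\neq\lang$ for every $\beta\in\lang$. Given an arbitrary $\beta\in\lang$, the two rules in $\fus E$ give $\alpha\in C_\vdash(\{\alpha\fus\beta\})$ and $\beta\in C_\vdash(\{\alpha\fus\beta\})$, so $\{\alpha,\beta\}\subseteq C_\vdash(\{\alpha\fus\beta\})$ and hence $C_\vdash(\{\alpha,\beta\})\subseteq C_\vdash(\{\alpha\fus\beta\})$; since the latter is a proper subset of $\lang$, so is $C_\vdash(\{\alpha,\beta\})$. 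As $\beta$ was arbitrary, $\alpha$ witnesses $NF$-paraconsistency.

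I do not expect a genuine obstacle: the argument is elementary and symmetric. The one point to state carefully is the passage from ``$\alpha\fus\beta$ (resp.\ $\alpha$ and $\beta$) is derivable from $\{\alpha,\beta\}$ (resp.\ from $\{\alpha\fus\beta\}$)'' to the corresponding inclusion of consequence-closures, which is exactly the closure-operator behaviour supplied by the standing Tarskian hypotheses; alternatively one can run the proof contrapositively using transitivity of $\vdash$ directly, composing a hypothetical derivation $\{\alpha\fus\beta\}\vdash\gamma$ with $\fus I$ (and dually $\{\alpha,\beta\}\vdash\gamma$ with $\fus E$), avoiding $C_\vdash$ entirely. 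It is also worth noting that the commutativity requirement on $\fus$ and the ``$\beta\fus\alpha$'' variants of the rules play no role here, so the lemma holds for an arbitrary, not necessarily commutative, fusion.
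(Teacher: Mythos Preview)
Your proof is correct and is essentially the same as the paper's: both directions rest on the inclusions $C_\vdash(\{\alpha\fus\beta\})\subseteq C_\vdash(\{\alpha,\beta\})$ (from $\fus I$) and $C_\vdash(\{\alpha,\beta\})\subseteq C_\vdash(\{\alpha\fus\beta\})$ (from $\fus E$), using the same witness $\alpha$. Your remark that only transitivity is really needed matches the paper's own observation in the remark following the proof, and your note that commutativity of $\fus$ plays no role is a fair additional observation.
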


\begin{proof}
Suppose $\mathcal{S}=\langle\lang,\vdash\rangle$ validates $\fus I$ and is $NF$-paraconsistent. Then there is an $\alpha\in\lang$ such that for all  $\beta\in\lang$, $C_\vdash(\{\alpha,\beta\})\neq\lang$. Now, since $\mathcal{S}$ validates $\fus I$, $C_\vdash(\{\alpha\fus\beta\})\subseteq C_\vdash(\{\alpha,\beta\})$. Thus $C_\vdash(\{\alpha\fus\beta\})\neq\lang$. So, there exists an $\alpha\in\lang$ such that for all $\beta\in\lang$, $C_\vdash(\{\alpha\fus\beta\})\neq\lang$. Hence $\mathcal{S}$ is $NFF$-paraconsistent.

On the other hand, suppose $\mathcal{S}=\langle\lang,\vdash\rangle$ validates $\fus E$ and is $NFF$-paraconsistent. Then there is an $\alpha\in\lang$ such that for all $\beta\in\lang$, $C_\vdash(\{\alpha\fus\beta\})\neq\lang$. Now, since $\mathcal{S}$ validates $\fus E$, $C_\vdash(\{\alpha,\beta\})\subseteq C_\vdash(\{\alpha\fus\beta\})$. So, there exists an $\alpha\in\lang$ such that for all $\beta\in\lang$, $C_\vdash(\{\alpha,\beta\})\neq\lang$. Hence $\mathcal{S}$ is $NF$-paraconsistent.
\end{proof}

\begin{rem}
It may be noted here that the above proof rests only on the transitivity of the logic $\mathcal{S}$. Thus the result can be lifted to certain non-Tarskian logics as well, as long as they are transitive.
\end{rem}

\begin{thm}\label{thm:NF=NFF}
A logic $\mathcal{S}=\langle\lang,\vdash\rangle$, with a fusion operator $\fus$, is $NF$-paraconsistent iff it is $NFF$-paraconsistent provided it validates $\fus I$ and $\fus E$.

Moreover, if $\mathcal{S}$ validates $\fus E$ and is $NFF$-paraconsistent then by Theorem \ref{thm:NFpara->para}, it is paraconsistent (if it has $\neg$), and by Theorem \ref{thm:NFpara->Fpara}, it is $\bot$-paraconsistent (if it has $\bot$).
\end{thm}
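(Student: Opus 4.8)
The plan is to read this theorem off directly from Lemma \ref{lem:Fusion}, which has already done the substantive work. The key observation is that the hypothesis here is stronger than in either half of that lemma: $\mathcal{S}$ is assumed to validate \emph{both} $\fus I$ and $\fus E$, so both implications of Lemma \ref{lem:Fusion} become available at once. For the forward direction I would assume $\mathcal{S}$ is $NF$-paraconsistent; since $\fus I$ holds, the first half of Lemma \ref{lem:Fusion} yields that $\mathcal{S}$ is $NFF$-paraconsistent. For the converse I would assume $\mathcal{S}$ is $NFF$-paraconsistent; since $\fus E$ holds, the second half of Lemma \ref{lem:Fusion} yields that $\mathcal{S}$ is $NF$-paraconsistent. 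These two implications together give the biconditional, so this part of the proof amounts to a single invocation of the lemma in each direction.

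For the ``moreover'' clause I would not invoke $\fus I$ at all. Assuming only that $\mathcal{S}$ validates $\fus E$ and is $NFF$-paraconsistent, the second half of Lemma \ref{lem:Fusion} again gives $NF$-paraconsistency. Then Theorem \ref{thm:NFpara->para} applies verbatim to conclude that $\mathcal{S}$ is paraconsistent whenever it carries a negation operator $\neg$, and Theorem \ref{thm:NFpara->Fpara} applies to conclude $\bot$-paraconsistency whenever it carries a falsity constant $\bot$. So the argument is just the chain: $\fus E$ together with $NFF$-paraconsistency gives $NF$-paraconsistency, which in turn gives paraconsistency (given $\neg$) and $\bot$-paraconsistency (given $\bot$).

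I do not anticipate a genuine obstacle; the only point requiring care is bookkeeping about which fusion rule feeds which direction. The inclusion $C_\vdash(\{\alpha\fus\beta\})\subseteq C_\vdash(\{\alpha,\beta\})$, which is what lets an $NF$-witness $\alpha$ remain an $NFF$-witness, comes from $\fus I$; the reverse inclusion $C_\vdash(\{\alpha,\beta\})\subseteq C_\vdash(\{\alpha\fus\beta\})$, needed for the converse, comes from $\fus E$. Since Lemma \ref{lem:Fusion} already isolates exactly these two inclusions (and, as the remark following it notes, uses only transitivity), nothing further is needed. It is worth adding that the biconditional therefore lifts to any transitive, possibly non-Tarskian, logic validating $\fus I$ and $\fus E$, while the ``moreover'' clause additionally inherits only what the cited theorems require — nothing beyond the definitions for Theorem \ref{thm:NFpara->para}, and monotonicity for Theorem \ref{thm:NFpara->Fpara}.
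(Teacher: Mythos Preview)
Your proposal is correct and matches the paper's own proof, which simply states ``Straightforward using Lemma \ref{lem:Fusion}.'' You have merely unpacked that one line with the obvious bookkeeping, so there is nothing to add.
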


\begin{proof}
Straightforward using Lemma \ref{lem:Fusion}.
\end{proof}

\begin{rem}
Suppose $\mathcal{S}=\langle\lang,\vdash\rangle$ is a logic with $\fus$ and $\neg$. Then we might say that $\mathcal{S}$ is \emph{$\fus$-pa\-ra\-con\-sis\-tent} iff there is an $\alpha\in\lang$ such that $C_\vdash(\{\alpha\fus\neg\alpha\})\neq\lang$. This can be seen as a generalization of $\land$-paraconsistency. Moreover, if a logic with $\fus$ and $\neg$ is $NFF$-paraconsistent then it is also $\fus$-paraconsistent.
\end{rem}

We next turn to some concrete examples of $NFF$-paraconsistent logics. Certain \emph{infectious logics} constitute a class of examples of such systems. As mentioned in \cite{Omori-Szmuc2017}, an infectious logic can be intuitively described as a many-valued logic that has an infectious or contaminating truth-value, i.e. a truth-value that if assigned to any formula, is also necessarily assigned to any formula containing the former as a subformula. PWK and IPWK are examples of infectious logics. In the following example, we describe the case of PWK in detail.

\begin{exa}
Let $\mathbf{WK}$ denote the algebra with two binary operations $\land,\lor$ and a unary operation $\neg$, given by the following tables.
        \[
        \begin{array}{lll}
             \begin{array}{c|ccc}
                  \land&0&\omega&1\\
                  \hline
                  0&0&\omega&0\\
                  \omega&\omega&\omega&\omega\\
                  1&0&\omega&1
             \end{array}\;&
             \begin{array}{c|ccc}
                 \lor&0&\omega&1\\
                  \hline
                  0&0&\omega&1\\
                  \omega&\omega&\omega&\omega\\
                  1&1&\omega&1
             \end{array}\;&
            \begin{array}{c|c}
                \neg&\\
                 \hline
                 0&1\\
                 \omega&\omega\\
                 1&0
            \end{array}
        \end{array}
        \]
Clearly, $\omega$ is an infectious element in this algebra in the sense that any operation of $\mathbf{WK}$ yield $\omega$ whenever one of the operands is $\omega$. Now, taking $\land,\lor,\neg$ as the connectives and and a countable set of propositional variables, we construct the formula algebra $\lang$ in the usual way. Then PWK can be described as the logic $\langle\lang,\vdash\rangle$ induced by the matrix $\langle\mathbf{WK},\{1,\omega\}\rangle$, where $\{1,\omega\}$ is the set of designated elements. This is in the following sense. For any $\Gamma\cup\{\alpha\}\subseteq\lang$,
\[
\Gamma\vdash\alpha\hbox{ iff for any valuation } v:\lang\to\mathbf{WK}, v(\Gamma)\subseteq\{1,\omega\}\hbox{ implies }v(\alpha)\in\{1,\omega\}.
\]
Now, taking $\land$ as the fusion operator, we can prove that PWK is $NFF$-paraconsistent as follows. Suppose $\alpha,\gamma\in\lang$ and $v:\lang\to\mathbf{WK}$ be a valuation such that $v(\alpha)=\omega$ and $v(\gamma)=0$. Then for any $\beta\in\lang$, $v(\alpha\land\beta)=\omega$. Hence $\alpha\land\beta\not\vdash\gamma$ and thus for all $\beta\in\lang$, $C_\vdash(\{\alpha\land\beta\})\neq\lang$. It is noteworthy here that PWK is not $NF$-paraconsistent, as pointed out earlier in Example \ref{exm:PWK-NotNFpara}.
\end{exa}

\begin{rem}
The above example can be generalized to any infectious logic with a fusion operator, provided the infectious truth-value is designated.
\end{rem}

\section{Summary and future directions}

In this article, we have explored the possibility of describing notions of paraconsistency without depending on negation operators in the language. The first such notion, which we call $NF$-paraconsistency, uses the meta-linguistic comma to juxtapose opposing or contradictory statements while the second one, which we call $NFF$-paraconsistency uses a conjunction-like fusion operator in the object language to do the same. The two notions are, however, equivalent in the presence of a set of introduction and elimination rules for the fusion operator (see Theorem \ref{thm:NF=NFF}). We have proved that $NF$-paraconsistency is a strictly stronger notion than paraconsistency and $\bot$-paraconsistency. Various examples of $NF$-paraconsistent and $NFF$-paraconsistent logics have been provided, including an important class of paraconsistent logics that are $NFF$-paraconsistent but not $NF$-paraconsistent.

These negation free notions of explosion or paraconsistency could allow one to see contradictions more generally, not just as pairs of formulas where one is the direct negation of the other. This could then lead to an exploration of paraconsistency that is independent of the logical operators and also in weaker or substructural logics.

We have used the notion of $NF$-paraconsistency to define a quasi-negation and have explored some properties of it in general, as well as in some particular logics. The quasi-negation of a formula is not unique, but quite surprisingly, it has many negation-operator-like properties.

The following are some future directions in which the current work may be extended.
\begin{itemize}
    \item While the usual notion of paraconsistency is satisfied with the non-explosion of a set of the form $\{\alpha,\neg\alpha\}$ for some $\alpha$, the concept of $NF$-paraconsistency demands that there exist an $\alpha$ such that $\{\alpha,\beta\}$ does not explode for every $\beta$. There can also be intermediary notions - one can consider a set $K\subseteq\lang$ such that there exists an $\alpha$ with the property that $\{\alpha,\beta\}$ is non-explosive for every $\beta\in K$. Such `relativized' notions of paraconsistency can be called $K$-paraconsistency. One can then perhaps achieve different `degrees of paraconsistency' by varying the set $K$.
    \item The quasi-negation can be studied further in general and also for specific logics. In the latter case, this can lead to characterizations of the sets of quasi-negations of a formula in these logics. The quasi-negation can also be studied algebraically or topologically whenever the logic has an algebraic or topological semantics.
    \item A similar notion of quasi-negation can be defined and studied using the notion of $NFF$-pa\-ra\-con\-sis\-ten\-cy.
\end{itemize}

\section*{Acknowledgement}
The authors wish to express their gratitude to Prof.\ Mihir K.\ Chakraborty for his encouragement and advise during multiple discussions regarding the article.

\bibliographystyle{eptcs}
\bibliography{biblio}
\end{document}